\newcommand\N{\mathbb{N}}
\newcommand\C{\mathbb{C}}
\newcommand\Z{\mathbb{Z}}
\newcommand\Ha{\mathbb{H}}
\newcommand{\of}[1]{\left(#1\right)}
\renewcommand\theta{\vartheta}
\newcommand\smod[1]{\ \left(\operatorname{mod} #1\right)}
\newcommand\res{\operatornamewithlimits{Res}}
\newcommand\ko{\hspace{.1em}}
\newtheorem{theorem}{Theorem}[section]
\newtheorem{corollary}[theorem]{Corollary}
\newtheorem{lemma}[theorem]{Lemma}
\theoremstyle{definition}
\newtheorem{definition}[theorem]{Definition}
\newtheorem*{remark}{Remark} 
\numberwithin{equation}{section}
\title[negative index meromorphic Jacobi forms]{On the Fourier coefficients of negative index meromorphic Jacobi forms}
\author{Kathrin Bringmann, Larry Rolen, and Sander Zwegers}
\address{Mathematical Institute\\University of
Cologne\\ Weyertal 86-90 \\ 50931 Cologne \\Germany}
\email{kbringma@math.uni-koeln.de} 
\email{sander.zwegers@uni-koeln.de}
\address{212 McAllister Building \\
The Pennsylvania State University \\ 
University Park, PA 16802}
\email{larryrolen@psu.edu}
\date{\today}
\thanks{The research of the first author was supported by the Alfried Krupp Prize for Young University Teachers of the Krupp foundation and the research leading to these results has received funding from the European Research Council under the European Union's Seventh Framework Programme (FP/2007-2013) / ERC Grant agreement n. 335220 - AQSER. The second author thanks the University of Cologne and the DFG for their generous support via the University of Cologne postdoc grant DFG Grant D-72133-G-403-151001011. }
\begin{document}
\maketitle
\begin{abstract}
In this paper, we consider the Fourier coefficients of meromorphic Jacobi forms of negative index. This extends recent work of Creutzig and the first two authors for the special case of Kac--Wakimoto characters which occur naturally in Lie theory, and yields, as easy corollaries, many important PDEs arising in combinatorics such as the famous rank--crank PDE of Atkin and Garvan. Moreover, we discuss the relation of our results to partial theta functions and quantum modular forms as introducted by Zagier, which together with previous work on positive index meromorphic Jacobi forms illuminates the general structure of the Fourier coefficients of meromorphic Jacobi forms. 

\end{abstract}
\section{Introduction and statement of results}\label{IntroS}
The general framework of Jacobi forms was laid down by Eichler and Zagier in \cite{EichlerZagier}. This theory has played an important role in many areas of number theory, including the theory of Siegel modular forms \cite{Pyatetskii-Shapiro}, the study of central $L$--values and derivatives of twisted elliptic curves \cite{GKZ}, and in the theory of umbral moonshine \cite{ChengHarveyDuncan}, just to name a few. Roughly speaking, a {\it Jacobi form} is a function $\phi\colon\C\times\Ha\rightarrow\C$, where $\Ha:=\{\tau\in\C\colon \mathrm{Im}(\tau)>0\}$, which satisfies two transformations similar to the transformations of elliptic functions and of modular forms (see Section \ref{JacobiS}). We refer to the variable in $\C$ (denoted by $z$) as the \emph{elliptic variable}, and to the variable in $\Ha$ (denoted by $\tau$) as the \emph{modular variable}. As any Jacobi form $\phi$ is one-periodic as a function of $z$, it is natural to consider its Fourier expansion in terms of $\zeta:=e(z)$, where $e(x):=e^{2\pi ix}$. In the classical case of holomorphic Jacobi forms, the Fourier coefficients give rise to a vector-valued modular form via the {\it theta decomposition} of the Jacobi form (see Section \ref{JacobiS}).

If $\phi$ has poles in the elliptic variable, the story becomes much more interesting and difficult. In this case, the Fourier coefficients depend on the choice of range of $z$ and are not modular. Such coefficients played a key role in the study of the mock theta functions of Ramanujan in \cite{zwegers}, where they were studied in relation to mock modular forms and certain Appell--Lerch sums. Subsequent extensions and applications to quantum black holes were given in \cite{DMZ} (see also \cite{ManschotMoore} for the appearance of mock modular forms in the
context of quantum gravity partition functions and AdS3/CFT2, as well
as \cite{Manschot} for a relation between multi-centered black holes and mock
Siegel--Narain theta functions). 
Meromorphic Jacobi forms also played a key role in the study of  
Kac and Wakimoto characters (see \cite{KacWakimoto}), as studied in \cite{BringmannFolsomKacWaki,Folsom,Ol}. Collectively, these works completed the picture in the case when the meromorphic Jacobi form has positive index.

In \cite{bringmann}, Creutzig and the first two authors considered the first cases of negative index Jacobi forms, finishing the question of Kac and Wakimoto for their characters. In particular, using the classical Jacobi theta function $\vartheta(z;\tau)$ as defined in (\ref{Jacobit}), the $(M,N)$--th Kac--Wakimoto character is, for $M,N\in\N_0$ and after a change of variables, the function 
\[\phi_{M,N}(z)=\phi_{M,N}(z;\tau):=\frac{\vartheta\left(z+\frac12;\tau\right)^M}{\vartheta(z;\tau)^N}.\]
As an example of their importance, these functions contain information about certain affine vertex algebras and their associated affine Lie algebras  as
 studied by Kac and Wakimoto \cite{KacWakimoto}, who asked for the general modularity properties of such functions.

Furthermore, for various choices of $M,N$, the functions $\phi_{M,N}$ are of combinatorial interest. In particular, the function $\phi_{0,1}$ is essentially the famous Andrews--Dyson--Garvan crank generating function, which was used by Andrews and Garvan \cite{AndrewsGarvanCrank} to provide a combinatorial explanation for the Ramanujan congruences for the partition function, as postulated by Dyson \cite{Dyson}. Hence, an explicit understanding of the Fourier coefficients of $\phi_{0,N}$ gives relations between powers of the crank generating function and certain Appell--Lerch series, giving a family of PDEs generalizing the ``rank-crank PDE'' of Atkin and Garvan \cite{AG} (see Corollary \ref{RankCrankCor}), and generalizing families of PDEs studied by Chan, Dixit, and Garvan in \cite{ChanDixitGarvan} and by the third author in \cite{ZwegersRankCrankPDE}. The beautiful identity of Atkin and Garvan gives a surprising connection between the rank and crank generating functions which can be used to show various congruences relating ranks and cranks, as well as useful relations between the rank and crank moments \cite{AG}. 

Further examples of negative index Jacobi forms may also been found in the theory of vertex operator algebras. For example, they arise in the context of certain chiral $2$-point functions associated to lattice theories whose trace is restricted to a simple module of a Heisenberg vertex operator algebra. The interested reader is referred to \cite{MasonTuiteTorus} Corollary 3.15 for details, and more details can also be found in \cite{MasonTuiteFree} and \cite{MasonTuiteZuevsky}.

In this paper, we generalize the work of \cite{bringmann}, offering a completely general picture for negative index Jacobi forms. To describe our results, we let $m\in-\frac12\N$, $\tau\in\Ha$, and $\varepsilon\in\{0,1\}$, and consider meromorphic functions $\phi: \C\to \C$ that satisfy the elliptic transformation law \eqref{elliptictrans}.
For example, if $\phi_{M,N}$ is a Kac--Wakimoto character,
then it transforms according to \eqref{elliptictrans} with $\varepsilon=\varepsilon(N)$ and $m=\frac{M-N}2$, where $\varepsilon(N)\in\{0,1\}$ is such that $\varepsilon(N)\equiv N\pmod2$. Note that a Jacobi form also satisfies a modular transformation law (in the suppressed variable $\tau$), but for our main result, only assuming \eqref{elliptictrans} suffices.

We now define $\mathcal D_x:=\frac1{2\pi i}\frac{\partial}{\partial x}$ for a general variable $x$, and consider the level $2M$ Appell--Lerch sum given for $M\in\frac12\N$ by 

\begin{equation}\label{AppellLerchDefn} F_{M,\varepsilon}(z,u)=F_{M,\varepsilon}(z,u;\tau):= \left(\zeta w^{-1}\right)^{M}\sum_{n\in\Z}\frac{(-1)^{n\varepsilon}w^{-2Mn}q^{Mn(n+1)}}{1-q^n\zeta w^{-1}},\end{equation}
where $q:=e(\tau),\, w:=e(u)$. 
The following is then our main result, where, as is further explained in Section \ref{JacobiS}, $D_{j,v}=D_{j,v}(\tau)$ is the $-j$--th Laurent coefficient of $\phi$ around $z=v$, $s_{z_0,\tau}$ gives the locations of a set of representatives of the poles of $\phi$, and $P_{z_0}$ is a fundamental parallelogram for the lattice $\Z\tau+\Z$. Further note that in the following theorem, although the dependence on $\tau$ is suppressed, both sides of \eqref{mainthm1eqn} depend on $\tau$.

\begin{theorem}\label{mainthm1}
Let $m\in-\frac12\N$ and $\varepsilon\in\{0,1\}$, and suppose that $z_0$ is chosen so that $\phi$ has no poles on $\partial P_{z_0}$. If $\phi$ is a meromorphic function satisfying \eqref{elliptictrans} with this particular choice of $\varepsilon$, then 
\begin{equation}\label{mainthm1eqn}\phi(z)=- \sum_{u\in s_{z_0,\tau}}\sum_{n\in\N}\frac{D_{n,u}}{(n-1)!}\mathcal D_v^{n-1}\left(F_{-m,\varepsilon}(z,v)\right)\big\vert_{v=u}.\end{equation}
\end{theorem}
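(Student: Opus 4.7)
My plan is to show that the function
\begin{equation*}
\Psi(z) := \phi(z) + \sum_{u \in s_{z_0,\tau}} \sum_{n \in \N} \frac{D_{n,u}}{(n-1)!}\,\mathcal D_v^{n-1}\bigl(F_{-m,\varepsilon}(z,v)\bigr)\big|_{v=u}
\end{equation*}
is holomorphic on all of $\C$ and still satisfies \eqref{elliptictrans}, and then to invoke a rigidity result for holomorphic Jacobi-type forms of negative index to conclude $\Psi \equiv 0$, which is precisely the claim.

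The first step is to analyze $F_{-m,\varepsilon}(z,v)$ locally near the diagonal $z = v$. In the defining sum \eqref{AppellLerchDefn}, only the $n = 0$ summand is singular at $z = v$, and expanding the factor $(\zeta w^{-1})^{-m}/(1-\zeta w^{-1})$ near $\zeta w^{-1} = 1$ yields the decomposition
\begin{equation*}
F_{-m,\varepsilon}(z,v) = -\frac{1}{2\pi i(z-v)} + G(z,v),
\end{equation*}
in which $G(z,v)$ is jointly holomorphic in a neighbourhood of the diagonal. Iterating the identity $\mathcal D_v^{n-1}(z-v)^{-1} = (n-1)!\bigl((2\pi i)^{n-1}(z-v)^n\bigr)^{-1}$ then gives
\begin{equation*}
\mathcal D_v^{n-1} F_{-m,\varepsilon}(z,v)\big|_{v=u} = -\frac{(n-1)!}{(2\pi i)^n (z-u)^n} + H_n(z,u),
\end{equation*}
where $H_n(\cdot,u)$ is holomorphic near $z = u$. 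Thus each building block on the right-hand side contributes a single pole at $z = u$, of exactly the correct order and leading coefficient.

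The second step is to verify that $F_{-m,\varepsilon}(z,v)$ satisfies \eqref{elliptictrans} in the variable $z$ with the same index $m$ and parity $\varepsilon$ as $\phi$; this is a direct calculation in which the shifts $z \mapsto z+1$ and $z \mapsto z+\tau$ are absorbed by a reindexing of the Appell--Lerch sum. Since $v$ is inert under these $z$-shifts, the same law is preserved after applying $\mathcal D_v^{n-1}$ and specialising $v = u$, so every term in the definition of $\Psi$ transforms exactly as $\phi$ does. With the natural convention that $D_{n,u}$ denotes the coefficient of $(2\pi i(z-u))^{-n}$ in the Laurent expansion of $\phi$ at $u$, the principal parts cancel term by term at every $u \in s_{z_0,\tau}$. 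Hence $\Psi$ is holomorphic on $P_{z_0}$, and the common elliptic transformation law propagates this holomorphicity to all of $\C$.

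Finally, I would invoke the standard rigidity statement that any entire function on $\C$ satisfying \eqref{elliptictrans} with $m \in -\tfrac12\N$ must vanish identically. The usual argument multiplies by a suitable monomial in $\vartheta(z;\tau)$ and $\vartheta\bigl(z+\tfrac12;\tau\bigr)$ chosen so that the total index is $0$ and the total parity is $0$, producing a holomorphic doubly periodic function which is constant by Liouville; vanishing of a theta factor at a lattice point forces this constant to be zero. Applied to $\Psi$, this yields $\Psi \equiv 0$, which is the claim. The main technical hurdle is the local analysis in step one: establishing the joint holomorphy of $G(z,v)$ across the diagonal is what prevents $v$-differentiation from creating spurious lower-order poles in $z$, and it is exactly this joint holomorphy that ensures the principal-part cancellation at each $u \in s_{z_0,\tau}$ is exact.
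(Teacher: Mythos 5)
Your Steps 1 and 3 are sound: the expansion $F_{-m,\varepsilon}(z,v)=-\frac{1}{2\pi i(z-v)}+G(z,v)$ with $G$ jointly holomorphic near the diagonal is correct, the resulting term-by-term cancellation of principal parts inside $P_{z_0}$ is correct, and the rigidity statement that an entire function satisfying \eqref{elliptictrans} with $m\in-\frac12\N$ vanishes identically is standard and provable exactly as you indicate. The gap is Step 2. The Appell--Lerch sum $F_{M,\varepsilon}(z,u)$ is elliptic in the \emph{second} variable $u$ --- that is the content of Lemma \ref{lem} --- but it does \emph{not} satisfy \eqref{elliptictrans} in the first variable $z$. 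For $z\mapsto z+1$ your reindexing claim is fine, but for $z\mapsto z+\tau$ the substitution $n\mapsto n-1$ in \eqref{AppellLerchDefn} leaves behind an extra factor $q^{-2Mn}$ in each summand, and one computes (with $M=-m$)
\begin{equation*}
F_{M,\varepsilon}(z+\tau,u)-(-1)^{\varepsilon}q^{M}\zeta^{2M}F_{M,\varepsilon}(z,u)
=(-1)^{\varepsilon}q^{M}\left(\zeta w^{-1}\right)^{M}\sum_{n\in\Z}(-1)^{n\varepsilon}w^{-2M(n-1)}q^{Mn(n-1)}\sum_{j=0}^{2M-1}\left(q^{n}\zeta w^{-1}\right)^{j},
\end{equation*}
a finite linear combination of theta functions that is generically nonzero. (It does vanish for $M=\frac12$, $\varepsilon=1$, $u=0$ --- the case $1/\vartheta$ --- which makes the simplest example misleading; but already for $M=1$, $\varepsilon=0$, $u=0$ it equals $q\zeta\bigl(\sum_{n}q^{n^2-n}+\zeta\sum_{n}q^{n^2}\bigr)\neq 0$.) This is exactly the ``mock'' behavior of Appell--Lerch sums. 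Consequently $\Psi$ is not visibly quasi-periodic under $z\mapsto z+\tau$, holomorphy on $P_{z_0}$ does not propagate to all of $\C$, and the rigidity argument cannot be invoked. Repairing this would require showing that the theta corrections cancel once summed against the Laurent data $D_{n,u}$ of $\phi$ --- a nontrivial family of identities among the $D_{n,u}$ that is essentially equivalent to the theorem itself.

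The paper sidesteps this issue entirely by working in the variable where $F$ \emph{is} elliptic: it integrates $v\mapsto\phi(v)\,F_{-m,\varepsilon}(z,v)$ over $\partial P_{z_0}$. By \eqref{elliptictrans} for $\phi$ and Lemma \ref{lem} for $F$ the integrand is doubly periodic in $v$ (the automorphy factors of index $m$ and $-m$ cancel), so the boundary integral vanishes; the Residue Theorem then equates $\frac{1}{2\pi i}\phi(z)$ (from the pole at $v=z$) with minus the sum of residues at the poles of $\phi$, which is \eqref{mainthm1eqn}. If you want to keep your ``holomorphic difference'' strategy, you would need to replace Step 2 by this contour/residue identity or by an explicit proof that the correction terms above cancel in the full sum.
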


\begin{remark}
As $\phi$ is a meromorphic function, there are only finitely many non-zero terms in the sum over $n$ in the right hand side of \eqref{mainthm1eqn}.
\end{remark}
\begin{remark}
This theorem complements work in \cite{BringmannRaumRichter}. Namely, the authors of \cite{BringmannRaumRichter} show that general H-harmonic Maass--Jacobi forms of index $m<0$ may be decomposed as 
\[\sum_{\ell\pmod{2m}}h_{\ell}(\tau)\widehat\mu_{m,\ell}(z;\tau)+\psi(z;\tau),\]
where the $h_{\ell}$ are the components of a vector-valued modular form, the $\widehat{\mu}_{m,\ell}$ are distinguished functions depending only on $m$ and $\ell$, and $\psi$ is a meromorphic Jacobi form of index $m$ 
(the interested reader is also referred to \cite{Raum} for important extensions of this work). Hence, Theorem \ref{mainthm1} allows one to  further decompose the pieces $\psi$ in these decompositions, which yields explicit decompositions of H-harmonic Maass--Jacobi forms. 
\end{remark}

As a corollary, applying this result to the Kac--Wakimoto characters $\phi_{M,N}$ yields the following, which extends Theorem 1.3 in \cite{bringmann} to the case of general Kac--Wakimoto characters. Note that the only pole of these functions occurs at
$ z = 0 $ (independent of $\tau$), and is of order precisely $N$.
\begin{corollary}\label{OldThm13Cor}
For $M\in\N_0$ and $N\in\N$ with $M<N$, we have the decomposition
\[\phi_{M,N}(z)=- \sum_{n=1}^{N}\frac{D_{n,0}}{(n-1)!}\mathcal D_v^{n-1}\left(F_{\frac {N-M}2,\, \varepsilon(N)}(z,v)\right)\bigg\vert_{v=0}.\]
\end{corollary}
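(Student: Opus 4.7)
The plan is to observe that this corollary is essentially a direct specialization of Theorem \ref{mainthm1}, so the proof reduces to verifying the hypotheses and simplifying the right-hand side of \eqref{mainthm1eqn} using the specific structure of the Kac--Wakimoto characters $\phi_{M,N}$.

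First, I would note that the text preceding the theorem already confirms that $\phi_{M,N}$ satisfies \eqref{elliptictrans} with $\varepsilon = \varepsilon(N)$ and $m = \frac{M-N}{2}$. Since we assume $M < N$ and $N \in \N$, we have $m \in -\frac12\N$, so Theorem \ref{mainthm1} applies once we choose an admissible $z_0$. Next, I would determine the pole set of $\phi_{M,N}$ inside a fundamental parallelogram. The classical Jacobi theta function $\vartheta(z;\tau)$ has simple zeros exactly at the lattice points $z \in \Z\tau + \Z$, while $\vartheta(z + \tfrac12;\tau)$ is nonzero at these points. Consequently $\phi_{M,N}$ has a pole of order exactly $N$ at $z = 0$ and is holomorphic elsewhere modulo the lattice. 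Choosing $z_0$ so that $0$ lies in the interior of $P_{z_0}$ and no other translate of $0$ lies on $\partial P_{z_0}$, we obtain $s_{z_0,\tau} = \{0\}$.

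With this data in hand, the outer sum over $u \in s_{z_0,\tau}$ in \eqref{mainthm1eqn} collapses to the single term $u = 0$. Furthermore, since the pole at $0$ has order exactly $N$, the Laurent coefficients $D_{n,0}$ vanish for $n > N$, so the inner sum over $n \in \N$ truncates to $1 \le n \le N$. Finally, substituting $-m = \frac{N-M}{2}$ and $\varepsilon = \varepsilon(N)$ into the Appell--Lerch function $F_{-m,\varepsilon}$ yields precisely $F_{\frac{N-M}{2},\,\varepsilon(N)}$, giving exactly the formula claimed in the corollary.

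There is no real obstacle here; the only mild point to check is that the choice of $z_0$ guaranteed by Theorem \ref{mainthm1} can indeed be made so that $0 \in \operatorname{int}(P_{z_0})$ and no lattice translate of $0$ meets $\partial P_{z_0}$, which is clear since we may freely shift $z_0$ within a neighborhood of any generic point. The result is then immediate from Theorem \ref{mainthm1}.
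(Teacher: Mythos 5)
Your proposal is correct and matches the paper's own proof, which likewise verifies that $\phi_{M,N}$ satisfies \eqref{elliptictrans} with $\varepsilon=\varepsilon(N)$ and $m=\frac{M-N}{2}$, notes that its only poles are of order $N$ on the lattice $\Z\tau+\Z$, and applies Theorem \ref{mainthm1} with the concrete choice $z_0=-\frac12-\frac{\tau}{2}$ so that the sum collapses to the single pole at $0$. The only cosmetic difference is that the paper fixes this explicit $z_0$ rather than arguing that a generic admissible $z_0$ exists.
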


As discussed in \cite{bringmann}, Corollary \ref{OldThm13Cor} has applications to interesting differential equations of combinatorial generating functions, and in particular recovers important identities which were previously observed.
In particular, Theorem \ref{mainthm1} immediately implies the rank crank PDE of Atkin and Garvan. To state it, we first recall the rank and crank generating functions (whose combinatorial definitions are not needed in this paper), which arise in many contexts and in particular give combinatorial explanations of Ramanujan's congruences (for example see \cite{AndrewsGarvanCrank,AtkinSDRank,Dyson}). Specifically, the generating functions $\mathcal{R}$ and $\mathcal{C}$ for the rank and crank, respectively, may be shown to possess the following representations \cite{AndrewsGarvanCrank,AtkinSDRank}, where for $n\in\N_0\cup\{\infty\}$, we set $(a;q)_n=(a)_n:=\prod_{j=0}^{n-1}\left(1-aq^j\right)$: 

\[\mathcal R(\zeta;q):=\sum_{n\geq0}\frac{q^{n^2}}{\of{\zeta q}_n\of{\zeta^{-1}q}_n},\text{  and   }\  \mathcal C(\zeta;q):=\frac{(q)_{\infty}}{(\zeta q)_{\infty}(\zeta^{-1}q)_{\infty}}. \]
Atkin and Garvan \cite{AG} proved the following PDE, where $\mathcal R^*$ and $\mathcal C^*$ are normalized versions of $\mathcal R$ and $\mathcal C$ (this PDE also follows as special cases of the main results in \cite{ChanDixitGarvan,ZwegersRankCrankPDE}). As explained in \cite{bringmann}, the following follows directly from Theorem \ref{mainthm1} when applied to the Jacobi form $\phi_{0,3}$.
\begin{corollary}\label{RankCrankCor} The rank-crank PDE of Atkin and Garvan holds true. That is, we have

\begin{equation}\label{rankcrankPDE} 2\eta(\tau)^2\mathcal C^*(\zeta;q)^3=\left(6\mathcal D_\tau+\mathcal D_z^2\right)\mathcal R^*(\zeta;q).\end{equation}
\end{corollary}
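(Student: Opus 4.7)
The strategy is to apply Corollary \ref{OldThm13Cor} directly to the Kac--Wakimoto character
\[
\phi_{0,3}(z;\tau)=\frac{1}{\vartheta(z;\tau)^3},
\]
which corresponds to $M=0$, $N=3$, so that $m=-\tfrac32$ and $\varepsilon=\varepsilon(3)=1$. The only pole of $\phi_{0,3}$ in $P_{z_0}$ is at $z=0$, of order exactly $3$, so on the right-hand side only the terms $n=1,2,3$ survive, yielding an identity of the shape
\[
\frac{1}{\vartheta(z;\tau)^3}=-\sum_{n=1}^{3}\frac{D_{n,0}}{(n-1)!}\,\mathcal D_v^{n-1}\bigl(F_{3/2,1}(z,v;\tau)\bigr)\Big|_{v=0}.
\]

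The next step is to compute the Laurent coefficients $D_{n,0}$. Using the Jacobi triple product, $\vartheta(z;\tau)=-2\pi z\,\eta(\tau)^3\exp\bigl(-\sum_{k\geq 1}\tfrac{2\zeta(2k)E_{2k}(\tau)}{(2k)!}\,(2\pi z)^{2k}\bigr)$, so that $\vartheta^{-3}$ has a Laurent expansion around $z=0$ of the form $\tfrac{\alpha(\tau)}{z^3}+\tfrac{\beta(\tau)}{z}+O(z)$ (the coefficient of $z^{-2}$ vanishes since $\vartheta$ is odd), where $\alpha$ is a pure $\eta$-power and $\beta$ involves the quasimodular Eisenstein series $E_2(\tau)$. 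This determines $D_{3,0}$, $D_{1,0}$ explicitly, and forces $D_{2,0}=0$.

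Simultaneously, one identifies $F_{3/2,1}(z,0;\tau)$ and its $v$-derivatives at $v=0$ with the rank generating function. After a standard change of summation variable, the series defining $F_{3/2,1}(z,0;\tau)$ becomes, up to a factor of $\zeta^{3/2}$ and theta quotients, the classical Appell--Lerch representation
\[
\mathcal R(\zeta;q)=\frac{1-\zeta}{(q)_\infty}\sum_{n\in\Z}\frac{(-1)^n q^{n(3n+1)/2}}{1-\zeta q^n},
\]
so that $F_{3/2,1}(z,0;\tau)$ is essentially the normalized rank $\mathcal R^*$. The derivatives $\mathcal D_v F_{3/2,1}|_{v=0}$ and $\mathcal D_v^2 F_{3/2,1}|_{v=0}$ differ from $\mathcal R^*$ by corrections arising from differentiating the prefactor $(\zeta w^{-1})^{3/2}$ and the shift $q^{Mn(n+1)}w^{-2Mn}$ in $v$. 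These corrections are absorbed using the heat-type identity $\mathcal D_\tau = -\tfrac12 \mathcal D_v^2\circ(\text{twist})$, so that the second derivative in $v$ produces a $\mathcal D_\tau$ acting on the rank series.

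Combining these two computations in the displayed identity above, the relation $\vartheta(z;\tau)^3 = -i(\zeta^{1/2}-\zeta^{-1/2})^3\,\eta(\tau)^6/\mathcal C(\zeta;q)^3$ from the triple product converts the left-hand side into $\eta^{-6}\mathcal C^3$ times a singular $\zeta$-factor, and clearing denominators and absorbing $\eta$-powers into the normalizations of $\mathcal R^*$ and $\mathcal C^*$ yields exactly \eqref{rankcrankPDE}. The main obstacle is bookkeeping: carefully tracking the $(2\pi)$-powers, the $E_2(\tau)$ contribution in $D_{1,0}$, and the constant arising from the heat equation, and verifying that the exact coefficient $6$ in $6\mathcal D_\tau+\mathcal D_z^2$ emerges from the ratio $D_{1,0}/D_{3,0}$ after this normalization.
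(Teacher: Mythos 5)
Your proposal follows essentially the same route as the paper: the paper's entire proof of Corollary \ref{RankCrankCor} is the observation that it follows from Theorem \ref{mainthm1} applied to $\phi_{0,3}$ (with $m=-\tfrac32$, $\varepsilon=1$, a single pole of order $3$ at $z=0$), deferring the explicit Laurent-coefficient and Appell--Lerch computations to \cite{bringmann}. Your sketch correctly identifies all the needed ingredients --- the vanishing of $D_{2,0}$ by oddness of $\vartheta$, the identification of $F_{3/2,1}$ with the Appell--Lerch form of $\mathcal R$, the heat-equation mechanism converting $\mathcal D_v^2$ into $6\mathcal D_\tau+\mathcal D_z^2$, and the triple-product link between $\vartheta^{-3}$ and $\mathcal C^3$ --- so it amounts to filling in the details the paper outsources.
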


\begin{remark}
Theorem \ref{mainthm1} immediately implies other PDEs for combinatorial generating functions. For example, the results in Section 3.2 of \cite{BringmannZwegers} in relation to the overpartition generating function immediately follow from Theorem \ref{mainthm1} as applied to $\phi_{1,3}$.
\end{remark}

As stated above, our main goal is to describe the Fourier coefficients of $\phi$. We first require partial theta functions defined for $z\in\C$, $\tau\in\mathbb H$, $M\in\frac12\N$, and $\ell\in M+\Z$ by

\begin{equation}\label{PartialThetaDefn} \theta^+_{\ell,\varepsilon,M}(z)=\theta^+_{\ell,\varepsilon,M}(z;\tau)  := \sum_{n\geq0}(-1)^{n\varepsilon}q^{\frac{\of{2Mn-\ell}^2}{4M}}\zeta^{2Mn-\ell}.\end{equation}
Our second result is then the following, where 
$h_{\ell,z_0}(\tau)$ is the $\ell$-th Fourier coefficient of $\phi$ with respect to $z_0$ as defined in \eqref{FourierCoeffDefn}.

\begin{theorem}\label{mainthm2}
Let $m\in-\frac12\N$, $\tau\in\Ha$, and $\phi$ be a meromorphic function satisfying \eqref{elliptictrans} with $\varepsilon\in\{0,1\}$. If $z_0\in\C$ is chosen so that $\phi$ has no poles on $\partial P_{z_0}$,  then we have for any $\ell\in m+\Z$ that
\begin{equation}\label{mainthm2eqn}
h_{\ell,z_0}(\tau) =  \sum_{u\in s_{z_0,\tau}}\sum_{n\in\N}\frac{D_{n,u}(\tau)}{(n-1)!}\mathcal D_z^{n-1}\left(\theta^+_{\ell,\varepsilon,-m} \left(z;\tau\right)\right)\big\vert_{z=u}.
\end{equation}
\end{theorem}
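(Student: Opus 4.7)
The plan is to deduce Theorem \ref{mainthm2} from Theorem \ref{mainthm1} by taking the $\ell$-th Fourier coefficient (in $z$) of both sides of \eqref{mainthm1eqn}. The right-hand side of \eqref{mainthm1eqn} is a finite sum of meromorphic functions in $z$, and the operators $\mathcal{D}_v^{n-1}$ together with the evaluation at $v=u$ act only on the auxiliary variable $v$; they therefore commute with Fourier extraction in $z$. Writing $g_{\ell,\varepsilon,M}(v;\tau)$ for the $\ell$-th Fourier coefficient of $F_{M,\varepsilon}(z,v;\tau)$ as a function of $z$ in the strip determined by $\partial P_{z_0}$, this reduces the theorem to establishing the identity
\[g_{\ell,\varepsilon,M}(v;\tau) = -\theta^+_{\ell,\varepsilon,M}(v;\tau), \qquad M=-m>0,\]
after which substituting back and canceling the two minus signs (and renaming the dummy variable $v$ to $z$) yields \eqref{mainthm2eqn}.

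To establish this identity, I would compute the Fourier expansion of $F_{M,\varepsilon}(z,v)$ in $z$ by a direct geometric-series expansion of each summand $1/(1-q^n\zeta w^{-1})$ appearing in the definition \eqref{AppellLerchDefn}. The hypothesis that $\phi$ has no poles on $\partial P_{z_0}$ forces each $u\in s_{z_0,\tau}$ to satisfy $\mathrm{Im}(u-z_0)/\mathrm{Im}(\tau)\in(0,1)$; hence for $z$ on the horizontal edge of $\partial P_{z_0}$ and $v$ close to $u$, one has $|q^n\zeta w^{-1}|<1$ precisely when $n\geq 1$, and one expands
\[\frac{1}{1-q^n\zeta w^{-1}} = \sum_{k\geq 0}q^{nk}\zeta^k w^{-k}\ (n\geq 1), \qquad \frac{1}{1-q^n\zeta w^{-1}} = -\sum_{k\geq 1}q^{-nk}\zeta^{-k}w^k\ (n\leq 0).\]
Substituting these into \eqref{AppellLerchDefn} and extracting the coefficient of $\zeta^\ell$ yields a single sum over one of these two ranges of $n$ (depending on the position of $\ell$ relative to $M$). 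The substitution $n\mapsto -n$ together with the completing-the-square identity $(2Mn-\ell)^2/(4M) = Mn^2 - n\ell + \ell^2/(4M)$ then identify the resulting series, via the definition \eqref{PartialThetaDefn}, with $-\theta^+_{\ell,\varepsilon,M}(v;\tau)$.

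The main obstacle is this coefficient computation. One must bookkeep the geometric expansion across the two cases $\ell\geq M$ and $\ell<M$ and align the normalization conventions (in particular the quadratic $q$-factor produced by completing the square and any $(2\pi i)$-factors coming from $\mathcal{D}$ versus $\partial$) built into the definitions \eqref{FourierCoeffDefn} of $h_{\ell,z_0}$ and of the Laurent coefficients $D_{n,u}$, so that the computed series matches the partial theta $\theta^+_{\ell,\varepsilon,M}$ exactly as in \eqref{PartialThetaDefn}. Once this identification has been made, Theorem \ref{mainthm2} follows immediately from Theorem \ref{mainthm1} by the commutation argument above.
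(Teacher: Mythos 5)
Your strategy (deducing Theorem \ref{mainthm2} from Theorem \ref{mainthm1} by extracting the $\ell$-th Fourier coefficient of the Appell--Lerch sum) is genuinely different from the paper's proof, which never invokes Theorem \ref{mainthm1}: the paper instead integrates $\phi(v)\ko\theta^+_{\ell,\varepsilon,-m}(v)$ directly over $\partial P_{z_0}$ and uses the quasi-periodicity relation \eqref{PartThetaTrans2} to identify the boundary integral with $h_{\ell,z_0}(\tau)$. Your route is viable in principle, but as written it contains a genuine gap at its central claim. Carrying out your own expansion in the strip $\mathrm{Im}(z)<\mathrm{Im}(v)<\mathrm{Im}(z)+\mathrm{Im}(\tau)$, the coefficient of $\zeta^{\ell}$ in $F_{M,\varepsilon}(z,v)$ is, for $\ell\le M-1$ (coming from the $n\le 0$ branch), equal to
\[-\sum_{n\le 0}(-1)^{n\varepsilon}w^{-2Mn-\ell}q^{Mn^2+n\ell}=-q^{-\frac{\ell^2}{4M}}\ko\theta^+_{\ell,\varepsilon,M}(v),\]
exactly as you claim. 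But for $\ell\ge M$ (the $n\ge 1$ branch) the substitution $n\mapsto -n$ produces $q^{-\ell^2/(4M)}\sum_{n\le -1}(-1)^{n\varepsilon}q^{(2Mn-\ell)^2/(4M)}w^{2Mn-\ell}$, which is \emph{not} $-q^{-\ell^2/(4M)}\theta^+_{\ell,\varepsilon,M}(v)$: it differs from it by the full two-sided theta function $q^{-\ell^2/(4M)}\sum_{n\in\Z}(-1)^{n\varepsilon}q^{(2Mn-\ell)^2/(4M)}w^{2Mn-\ell}$. So the identity $g_{\ell,\varepsilon,M}=-\theta^+_{\ell,\varepsilon,M}$ on which your whole reduction rests is false for infinitely many of the $\ell\in m+\Z$ covered by the theorem.

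The argument can be repaired, but it requires an additional idea you do not supply: the discrepancy term is $2\pi i\sum_{u\in s_{z_0,\tau}}\res_{v=u}[\phi(v)\Theta_\ell(v)]$ with $\Theta_\ell$ the full theta function above, and one checks using \eqref{elliptictrans} (with $m=-M$) that $\phi\ko\Theta_\ell$ is invariant under $v\mapsto v+1$ and $v\mapsto v+\tau$, i.e.\ is an elliptic function; hence the sum of its residues over a fundamental parallelogram vanishes and the extra term drops out. Without this step the proof does not establish \eqref{mainthm2eqn} for $\ell\ge -m$. You should either add this vanishing argument or adopt the paper's direct contour-integral proof, whose use of \eqref{PartThetaTrans2} treats all $\ell\in m+\Z$ uniformly and avoids the case split altogether. (The remaining points you flag --- commuting $\mathcal D_v^{n-1}\vert_{v=u}$ with the $z$-integration, the location of the convergence annulus, and the normalizations --- are indeed routine and unproblematic.)
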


In particular, Theorem \ref{mainthm2} directly implies the following result, which is analogous to Theorem 1.4 of \cite{bringmann} (where a different range for the Fourier coefficients is used). 
\begin{corollary}\label{OldThm14Cor}
Let $\phi=\phi_{M,N}$ with $M\in\N_0$, $N\in\N$, and $M<N$. Then, for any $\ell\in\frac{M-N}2+\Z$, we have 
\[h_{\ell,-\frac12-\frac{\tau}2}(\tau)= \sum_{n=1}^{N}\frac{D_{n,0}(\tau)}{(n-1)!}\mathcal D_z^{n-1}\left(\theta^+_{\ell,\varepsilon(N),\frac{N-M}2} (z;\tau)\right)\Big\vert_{z=0}.\]
\end{corollary}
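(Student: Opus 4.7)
The plan is to deduce Corollary \ref{OldThm14Cor} as a direct specialization of Theorem \ref{mainthm2} applied to $\phi = \phi_{M,N}$, with $m = \frac{M-N}{2}$, $\varepsilon = \varepsilon(N)$, and the specific choice $z_0 = -\tfrac12 - \tfrac{\tau}{2}$. First I would record the two structural facts about $\phi_{M,N}$ that the paper has already isolated: it satisfies the elliptic transformation \eqref{elliptictrans} with the stated parameters, and its only pole modulo the lattice $\Z\tau+\Z$ lies at $z=0$, of order exactly $N$. These two facts supply the hypotheses of Theorem \ref{mainthm2} essentially for free.

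Next I would verify the geometric condition that $\phi_{M,N}$ has no poles on $\partial P_{z_0}$ for the specific $z_0 = -\tfrac12-\tfrac{\tau}{2}$. Since the fundamental parallelogram $P_{z_0}$ is a translate of $[0,1)\tau + [0,1)$ by $z_0$, this shift places the center of $P_{z_0}$ exactly at the origin, so the lattice of poles $\Z\tau+\Z$ meets the closure $\overline{P_{z_0}}$ only at the interior point $z=0$. Consequently the set of pole representatives $s_{z_0,\tau}$ collapses to the singleton $\{0\}$, and the outer sum on the right hand side of \eqref{mainthm2eqn} has only the term $u=0$.

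It remains to truncate the inner sum over $n\in\N$. Because $\phi_{M,N}$ has a pole of order precisely $N$ at $z=0$, its Laurent expansion around $0$ takes the form $\sum_{n=-N}^\infty c_n z^n$, so in the notation of Section \ref{JacobiS} we have $D_{n,0}(\tau) = 0$ for all $n > N$, while $D_{N,0}(\tau)\not\equiv 0$. This forces the inner sum to run only over $n=1,\ldots,N$. Substituting $m = \frac{M-N}{2}$ (so that $-m = \frac{N-M}{2}$) into $\theta^+_{\ell,\varepsilon,-m}$ and assembling the pieces then yields exactly the formula claimed in the corollary.

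There is no real obstacle here; the only point that requires care is the verification that $\partial P_{z_0}$ avoids the poles, and this is a one-line geometric check given the translation-invariance of the parallelogram construction. The argument is otherwise a straightforward transcription of Theorem \ref{mainthm2} in the special case where $\phi$ has a single pole class whose order controls the inner summation.
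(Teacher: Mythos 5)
Your proposal is correct and follows exactly the paper's own route: the paper likewise notes that $\phi_{M,N}$ satisfies \eqref{elliptictrans} with $\varepsilon=\varepsilon(N)$ and $m=\frac{M-N}2$, that its only poles lie in $\Z+\Z\tau$ and have order $N$, and then plugs $z_0=-\frac12-\frac\tau2$ into Theorem \ref{mainthm2}. Your write-up merely spells out the boundary check and the truncation of the inner sum, which the paper leaves implicit.
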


\begin{remark}
Following the proof of Theorem 1.5 of \cite{bringmann}, one finds that the partial theta functions $\vartheta^+_{\ell,\varepsilon-m}$ are all quantum modular forms, so that Theorem \ref{mainthm2} implies that the Fourier coefficients of a general negative index Jacobi form are expressible as derivatives of quantum modular forms times quasimodular forms. This is further explained in Section \ref{QuantumS} (see Theorem \ref{QuantumPartial}).
\end{remark}

The paper is organized as follows. In Section \ref{JacobiS}, we review the basic theory of Jacobi forms, theta decompositions, and the definition of Fourier coefficients of Jacobi forms. In Section \ref{QuantumS}, we discuss the theory of quantum modular forms in the context of partial theta functions. We complete the proofs of the main results in Section \ref{ProofS}. 

\section*{Acknowledgement}
The authors thank the referee for many comments which improved the exposition of this paper.

\section{Preliminaries}\label{PrelimS}

\subsection{Jacobi forms and Fourier coefficients}\label{JacobiS}
We begin by recalling the notion of a holomorphic Jacobi form, referring the interested reader to \cite{EichlerZagier} for the general theory. Roughly speaking, a \emph{holomorphic Jacobi form} is a holomorphic function $\phi$ on $\C\times\Ha$ which satisfies a \emph{modular transformation}, together with an \emph{elliptic transformation}. We are mainly interested in the explicit elliptic transformation property, and  consider functions which transform in the complex $z$-variable as

\begin{equation}\label{elliptictrans}
\phi(z+\lambda\tau+\mu) = (-1)^{2m\mu+\lambda\varepsilon}e^{-2\pi im(\lambda^2\tau+2\lambda z)}\ko \phi(z)
\end{equation}
for all $\lambda,\mu\in\Z$, where $m\in\frac12\Z$ and $\varepsilon\in\{0,1\}$. We refer to $m$ as the index of the Jacobi form.
For example, in the motivating case of the Kac--Wakimoto characters, we require the Jacobi $\vartheta$ function given by 
\begin{equation}\label{Jacobit}\vartheta(z)=\vartheta(z;\tau):=-i\zeta^{-\frac12}q^{\frac18}(q)_{\infty}(\zeta )_{\infty}\left(\zeta^{-1}q\right)_{\infty}.\end{equation}
It is well-known that $\vartheta(z;\tau)$ is a holomorphic Jacobi form with multiplier of weight $\frac12$ and index $\frac12$. In particular, it satisfies $(2.1)$ with $\varepsilon=1$ and $m=\frac12$, namely, 
\begin{equation}
\label{JacobiThetaJacobiForm}
\vartheta(z+\lambda\tau+\mu)=(-1)^{\lambda+\mu}e^{-\pi i\left(\lambda^2\tau+2\lambda z\right)}\vartheta(z)
\end{equation}
for all $\lambda,\mu\in\Z$.

One of the most useful properties of holomorphic Jacobi forms, which in particular makes the study of their Fourier coefficients in $\zeta$ easy, is their theta decomposition. Namely, directly from the holomorphicity of $\phi$ and the elliptic transformation property, one finds that any holomorphic Jacobi form $\phi$ of index $m\in\N$ with $\varepsilon=0$ decomposes as 
\[\phi(z;\tau)=\sum_{\ell\smod{2m}}h_{\ell}(\tau)\vartheta_{m,\ell}(z;\tau),\]
where 
\[\vartheta_{m,\ell}(z;\tau):=\sum_{\substack{n\in\Z\\ n\equiv\ell\smod{2m}}}q^{\frac{n^2}{4m}}\zeta^n,\]
and 
\[h_{\ell}(\tau):=q^{-\frac{\ell^2}{4m}}\int_0^1\phi(z;\tau)e^{-2\pi i \ell z}dz.\]
Moreover, the modularity of $\phi$ implies that the coefficients $h_{\ell}$ ($0\leq\ell\leq 2m-1$) comprise a vector-valued modular form of weight $k-1/2$, where $k$ is the weight of the Jacobi form (see Chapter 2, Section 5 of \cite{EichlerZagier}).

When $\phi$ has poles, the situation is more complicated. Firstly, the Fourier coefficients depend on the imaginary part of $z$ and on the choice of path of integration.
To this end, following \cite{DMZ}, we define for $z_0\in\C$ and $\phi$ a function satisfying the transformation in \eqref{elliptictrans} with $m\in\frac12\Z$ and $\varepsilon\in\{0,1\}$, the (slightly modified) Fourier coefficients by 
\begin{equation}\label{FourierCoeffDefn} h_{\ell}(\tau) =h_{\ell,z_0}(\tau):=q^{-\frac{\ell^2}{4m}} \int_{z_0}^{z_0+1} \phi(z;\tau)\ko e^{-2\pi i\ell z} dz,\end{equation}
where $\ell\in m+\Z$.
Here, the path of integration is the straight line connecting $z_0$ and $z_0+1$ if there are no poles on this line. If there is a pole on the line which is not an endpoint, then we define the path to be the average of the paths deformed to pass above and below the pole. Finally, if there is a pole at an endpoint, note that the integral \eqref{FourierCoeffDefn} only depends on the imaginary part of $z_0$. Then we replace the path $[z_0,z_0+1]$ with $[z_0-\delta,z_0+1-\delta]$ for small $\delta$ so that there is not a pole at an endpoint, and then define the integral as above when there is a pole in the interior of the line.

To facilitate our study of meromorphic Jacobi forms, we require some notation. Firstly, for $z_0\in\C$, we let $P_{z_0}:= z_0+[0,1)\tau+[0,1)$ and we denote by $s_{z_0,\tau}$ the complete set of poles of $\phi$ in $P_{z_0}$. Finally, we denote the Laurent coefficients of $\phi$ around $u$ by
\begin{equation}\label{LaurentCoeffDefn}\phi(z)=:\frac{D_{n,u}}{\left(2\pi i(z-u)\right)^n}+\ldots+\frac{D_{1,u}}{2\pi i(z-u)}+O(1).\end{equation}
We note in passing that the Laurent coefficients are well--known to be quasimodular in the suppressed variable $\tau$ if $\phi$ is a Jacobi form. Roughly speaking, a quasimodular form is simply the constant term in $1/v$ of an almost holomorphic modular form, where an almost holomorphic modular form is a function of $\tau\in\mathbb H$ which transforms as a modular form and which is a polynomial in $1/v$ with holomorphic coefficients, where $\tau=u+iv$ with $u,v\in\mathbb R$.

\subsection{Partial theta functions and quantum modular forms}\label{QuantumS}

In this section, we recall some basic facts concerning quantum modular forms. We begin with the following definition, where $|_k$ is the usual Petersson slash operator (see \cite{ZagierQuantum} for more background on quantum modular forms).

\begin{definition}
For any cofinite set $\mathcal{Q}\subseteq\mathbb Q$, we say a function $f\colon \mathcal{Q}\rightarrow\C$ is a {\it quantum modular form} of weight $k\in\frac12\Z$ on a congruence subgroup $\Gamma$  if for all $\gamma\in\Gamma$, the cocycle
\[r_{\gamma}(\tau):=f|_k(1-\gamma)(\tau)\]

\noindent extends to an open subset of $\mathbb R$ and is analytically ``nice''. Here ``nice'' could mean continuous, smooth, real-analytic etc. 
\end{definition}

One of the most striking examples of a quantum modular form is Kontsevich's function $F(q)$, as studied by Zagier in \cite{ZagierVassiliev}, which is given by 

\begin{equation}\label{kontstrange}F(q):=\sum_{n\geq0}(q)_n.\end{equation}
 This function does not converge on any open subset of $\C$, but is a finite sum for $q$ any root of unity. Zagier's study of $F$ depends on the ``sum of tails'' identity 
\begin{equation}
\label{sumoftails}
\displaystyle\sum_{n\geq0}\left(\eta(\tau)-q^{\frac1{24}}\left(q\right)_n\right)=\eta(\tau)D\left(\tau\right)-\frac12\sum_{n\geq1}n\chi_{12}(n)q^{\frac{n^2-1}{24}},
\end{equation}
\noindent
where $\eta(\tau):=q^{1/24}(q)_{\infty}$,
$D(\tau):=-\frac12+\sum_{n\geq1}\frac{q^n}{1-q^n},$ and $\chi_{12}(\cdot):=\of{\frac{12}{\cdot}}$.
The key observation of Zagier is that in (\ref{sumoftails}), the values $\eta(\tau)$ and $\eta(\tau)D(\tau)$ vanish to infinite order as $\tau\rightarrow h/k$, so at a root of unity $\xi$, $F(\xi)$ is essentially the limiting value of the partial theta function $\sum_{n\geq1}n\chi_{12}(n)q^{\frac{n^2-1}{24}}$, which he showed has quantum modular properties \cite{ZagierVassiliev}.

In the decomposition of Jacobi forms of negative index, we encounter the more general partial theta functions $\vartheta^+_{\ell,M,\varepsilon}(z;\tau)$ defined in \eqref{PartialThetaDefn}.  
These functions turn out to yield quantum modular forms.

\begin{theorem}
\label{QuantumPartial}
For any $m\in-\frac12\N$, $\ell\in m+\Z$, $\varepsilon\in\{0,1\}$,  and $z\in\mathbb Q\tau+\mathbb Q$, the partial theta function $\theta^+_{\ell,\varepsilon,-m}(z;\tau)$ is (up to multiplication by a rational power of $q$) a quantum modular form of weight $1/2$ whose cocycles are real-analytic except at one point.

\end{theorem}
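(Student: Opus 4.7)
The plan is to reduce the assertion to the known quantum modularity of classical one-variable partial theta series, essentially by decoupling the elliptic variable from $\tau$.

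First, write $z = \alpha\tau+\beta$ with $\alpha,\beta\in\Q$ and substitute into \eqref{PartialThetaDefn}. Using $\zeta^{2(-m)n-\ell} = q^{\alpha(2(-m)n-\ell)}e^{2\pi i\beta(2(-m)n-\ell)}$ and completing the square in the $q$-exponent (with $M := -m$) one obtains
\[ \theta^+_{\ell,\varepsilon,-m}(\alpha\tau+\beta;\tau) = q^{-M\alpha^2}\, e^{-2\pi i\beta\ell}\, F(\tau), \]
where
\[ F(\tau) := \sum_{n\geq 0}(-1)^{n\varepsilon}\, e^{4\pi i Mn\beta}\, q^{(2M(n+\alpha)-\ell)^2/(4M)}. \]
After reindexing so that the $q$-exponent is a pure quadratic in the summation variable, $F$ is a classical partial theta series supported on an arithmetic progression and weighted by a periodic character on $\Z_{\geq 0}$. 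The prefactor $q^{-M\alpha^2}$ (together with the constant phase) is the rational power of $q$ in the theorem statement, so the quantum modularity of $\theta^+_{\ell,\varepsilon,-m}(\alpha\tau+\beta;\tau)$ is equivalent to that of $F$.

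Next, I would invoke the strategy of Theorem 1.5 of \cite{bringmann}, itself an adaptation of Zagier's method from \cite{ZagierVassiliev}. Splitting the periodic character in $F$ into its even and odd parts decomposes $F$ into a piece that, up to a constant, coincides with a full modular theta function of weight $1/2$, and a genuinely mock piece whose shadow is a weight $3/2$ unary theta series and hence a modular form on a suitable congruence subgroup $\Gamma$. Completing the mock piece by subtracting an Eichler integral of its shadow produces a real-analytic weight $1/2$ object transforming modularly under $\Gamma$ with some multiplier $\psi$, and rearranging yields
\[ r_\gamma(\tau) = F(\tau) - \psi(\gamma)(c\tau+d)^{-1/2}F(\gamma\tau) = \int_{-d/c}^{i\infty}\frac{g(w)}{(-i(w-\tau))^{3/2}}\,dw \]
for $\gamma = \begin{pmatrix} a & b \\ c & d \end{pmatrix} \in \Gamma$ with $c>0$, where $g$ is the shadow. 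Differentiating under the integral sign shows that $r_\gamma$ is real-analytic on $\mathbb R\setminus\{-d/c\}$, which is precisely the one-point exceptional behavior asserted.

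The main obstacle is bookkeeping rather than any new analytic ingredient: identifying $\Gamma$ in terms of the denominators of $\alpha,\beta$ and of the parameters $M,\ell,\varepsilon$, computing $\psi$ from the periodic character together with the phase $e^{-2\pi i\beta\ell}$, and writing $g$ as a linear combination of unary theta series indexed by residues modulo $2M$ and by the parity $\varepsilon$. Since a finite sum of cocycles each real-analytic on $\mathbb R\setminus\{-d/c\}$ remains real-analytic there, the single exceptional point is preserved through this decomposition, yielding the claim.
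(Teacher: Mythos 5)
Your proposal follows essentially the same route as the paper: specialize $z=\alpha\tau+\beta$ with $\alpha,\beta\in\Q$, complete the square to reduce to a one-variable partial theta series of the form $\sum_{n\geq0}\chi(n)q^{(n+a/b)^2}$ with $\chi$ periodic, and recognize this as a ``holomorphic Eichler integral'' of a weight $3/2$ unary theta (cusp) form, whence quantum modularity of weight $1/2$ with cocycles given by period integrals that are real-analytic away from the single point $-d/c$. The only difference is that the paper concludes by citing Theorem 1.1 of \cite{BringmannRolen}, whereas you sketch the proof of that cited result (the even/odd splitting, completion, and differentiation under the integral sign).
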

\begin{proof}
Note that if $z=\lambda\tau+\mu$ with $\lambda,\mu\in\mathbb Q$, then, up to multiplication by a constant and a multiple of $q$, and after rescaling $\tau\mapsto \tau/m$, we are led to study a partial theta series of the form
\begin{equation}\label{PartialThetaProofSketch}\sum_{n\geq0}(-1)^{n\varepsilon}q^{\left(n+\frac ab\right)^2}\end{equation}
for some $ a/b \in\mathbb Q$. As explained in \cite{BringmannRolen}, \eqref{PartialThetaProofSketch} is a ``holomorphic Eichler integral'' of the theta series 
\begin{equation}\label{ThetaProofSketch}\sum_{n\in\Z}(-1)^{n\varepsilon}\left(n+\frac ab\right)q^{\left(n+\frac ab\right)^2}.\end{equation}
As \eqref{ThetaProofSketch} is a cusp form of weight $3/2$, Theorem \ref{QuantumPartial} follows directly from Theorem 1.1 of \cite{BringmannRolen} (for a different perspective on these quantum modular forms, the reader is referred also to \cite{ChoiLimRhoades}).
\end{proof}

\section{Proofs of the main results}\label{ProofS}

We begin by giving the key properties of $F_{M,\varepsilon}$ needed for the proof of Theorem \ref{mainthm1}, both of which follow from direct calculations.

\begin{lemma}\label{lem}
Let $M\in\frac12\N$ and $\tau\in\Ha$. As a function of $u$, we have the elliptic transformation property
$$F_{M,\varepsilon}(z,u+\lambda\tau+\mu)= (-1)^{2M\mu+\lambda\varepsilon}e^{-2\pi iM(\lambda^2\tau+2\lambda u)}\ko F_{M,\varepsilon}(z,u),$$
for all $\lambda,\mu\in\Z$. Furthermore, as a function of  $u$, $F_{M,\varepsilon}(z,u)$ is a meromorphic function having only simple poles in $z+\Z\tau+\Z$ and residue $\frac1{2\pi i}$ in $u=z$. 
\end{lemma}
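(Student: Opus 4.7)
The plan is to verify both assertions by direct computation on the series \eqref{AppellLerchDefn}, regarding $F_{M,\varepsilon}(z,u)$ as a meromorphic function of $u$ with $z$ and $\tau$ frozen as parameters.

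For the elliptic transformation, I would first reduce to the two generators $(\lambda,\mu)=(0,1)$ and $(\lambda,\mu)=(1,0)$; the general case then follows by iteration, since the claimed multiplier obeys the same cocycle relation as the theta-multiplier in \eqref{JacobiThetaJacobiForm}. The shift $u\mapsto u+1$ leaves $w=e(u)$ invariant and only affects the prefactor $\of{\zeta w^{-1}}^M$, producing the expected $e^{-2\pi iM}=(-1)^{2M}$. The shift $u\mapsto u+\tau$ sends $w\mapsto qw$; a direct substitution turns the summand into one with denominator $1-q^{n-1}\zeta w^{-1}$, which I would then re-index by $n\mapsto n+1$. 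The three $q$-exponents telescope via
\[
-2M(n+1)+M(n+1)(n+2)=Mn(n+1),
\]
and $(-1)^{(n+1)\varepsilon}=(-1)^{\varepsilon}(-1)^{n\varepsilon}$; collecting the remaining factors yields exactly $(-1)^\varepsilon q^{-M}w^{-2M}=(-1)^\varepsilon e^{-2\pi iM(\tau+2u)}$, matching the multiplier in the claim.

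For the pole statement I would first note that the series converges absolutely and uniformly on compact subsets avoiding the loci $q^n\zeta w^{-1}=1$, because the Gaussian factor $q^{Mn(n+1)}$ dominates every other $n$-dependent term. Each summand, as a function of $u$, has only simple poles, located at the points $u\in z+n\tau+\Z$; since these loci are disjoint for distinct $n\in\Z$, the full sum has at most simple poles, all contained in $z+\Z\tau+\Z$. At $u=z$ only the $n=0$ term is singular, the prefactor $\of{\zeta w^{-1}}^M$ evaluates to $1$ there, and the Taylor expansion $1-e^{2\pi i(z-u)}=2\pi i(u-z)+O\of{(u-z)^2}$ produces the residue $\frac{1}{2\pi i}$.

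The main obstacle is the $u\mapsto u+\tau$ bookkeeping above: the re-index and the collection of $q$-exponents must be carried out precisely so that, after factoring out $(-1)^\varepsilon q^{-M}w^{-2M}$, what remains is exactly the original series rather than something differing by a spurious power of $q$ or $w$. Once that identity is secured, the cocycle argument closes the elliptic transformation, and the pole and residue claims follow immediately from the term-by-term analysis.
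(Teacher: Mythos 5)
Your proof is correct and is precisely the ``direct calculation'' the paper alludes to: the paper states Lemma \ref{lem} without writing out a proof, and your argument supplies exactly the intended verification. The reduction to the generators $(\lambda,\mu)=(0,1)$ and $(1,0)$ with the cocycle property, the $q$-exponent bookkeeping under $u\mapsto u+\tau$ (which does yield the factor $(-1)^{\varepsilon}q^{-M}w^{-2M}$), and the term-by-term pole and residue analysis at $u=z$ all check out.
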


We are now in a position to prove our main result, Theorem \ref{mainthm1}.

\begin{proof}[Proof of Theorem \ref{mainthm1}]
Let $z\in\C$ be such that $\phi$ is holomorphic in $z$. Further let $z_0\in\C$ be such that $z\in P_{z_0}$ and that $\phi$ has no poles on the boundary of $P_{z_0}$. We consider the integral
$$ \int_{\partial P_{z_0}} \phi(v)\ko F_{-m,\varepsilon}(z,v)\ko dv,$$
which we now compute in two different ways: on the one hand, we find from equation \eqref{elliptictrans} and Lemma \ref{lem} that the integrand is both one-- and $\tau$--periodic. Hence we immediately see that the integral vanishes. On the other hand, we can use the Residue Theorem to give another evaluation of the integral. For this, we note that the poles of $v\mapsto \phi(v)\ko F_{-m,\varepsilon}(z,v)$ in $P_{z_0}$ are the poles of $\phi$ in $P_{z_0}$ together with $z$. In $v=z$ the function has residue $\frac 1{2\pi i}\ko \phi(z)$, so we have
$$ \int_{\partial P_{z_0}} \phi(v)\ko F_{-m,\varepsilon}(z,v)\ko dv = \phi(z) + 2\pi i\sum_{u\in s_{z_0,\tau}} \res_{v=u}\ko \bigl[ \phi(v)\ko F_{-m,\varepsilon}(z,v)\bigr]$$
and thus we get
$$\phi(z) =- 2\pi i\sum_{u\in s_{z_0,\tau}} \res_{v=u}\ko \bigl[ \phi(v)\ko F_{-m,\varepsilon}(z,v)\bigr].$$
Since the function $v\mapsto\phi(v)\ko F_{-m,\varepsilon}(z,v)$ is invariant under translation by a lattice point, so is $u\mapsto\res_{v=u}\ko \bigl[ \phi(v)\ko F_{-m,\varepsilon}(z,v)\bigr]$. Hence we can drop the condition that $z_0$ is such that $z\in P_{z_0}$ and take $z_0$ to be arbitrary (as long as there are no poles of $\phi$ on the boundary of $P_{z_0}$). The theorem now follows immediately by inserting the definition of the Laurent coefficients $D_{n,u}$.
\end{proof}

Before giving the proof of Theorem \ref{mainthm2}, we require the following properties of the partial theta functions under consideration, which follow from a direct calculation. 

\begin{lemma}
For all $\lambda,\mu\in\Z$, $\ell\in\frac12\Z$, and $M\in\frac12\N$, we have
\begin{equation}\label{PartThetaEllTrans} (-1)^{2\ell\mu}q^{M\lambda^2}\zeta^{2M\lambda}\ko \theta^+_{\ell,\varepsilon,M}\of{z+\lambda\tau+\mu} = \theta^+_{\ell-2M\lambda,\varepsilon,M}(z).
\end{equation}
 Furthermore, 
\begin{equation}\label{PartThetaTrans2} \theta^+_{\ell,\varepsilon,M}(z) - (-1)^{\varepsilon}q^{M}\zeta^{2M}\ko \theta^+_{\ell,\varepsilon,M}(z+\tau) = q^{\frac{\ell^2}{4M}}\zeta^{-\ell}.\end{equation}
\end{lemma}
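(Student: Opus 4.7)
Both identities follow by direct manipulation of the defining series \eqref{PartialThetaDefn}. For the first identity \eqref{PartThetaEllTrans}, the plan is to substitute $z \mapsto z+\lambda\tau+\mu$ into the definition, simplify the resulting exponentials using the integrality conditions $2M\in\N$ and $2\ell\in\Z$, and then complete the square in the Gaussian exponent. Specifically, substitution produces in each summand an additional factor
\begin{equation*}
e\bigl((2Mn-\ell)(\lambda\tau+\mu)\bigr) \;=\; (-1)^{2\ell\mu}\,\zeta^{2Mn-\ell}\,q^{\lambda(2Mn-\ell)},
\end{equation*}
where the factor $e(2M\mu n)=1$ (since $2M\mu n\in\Z$) is trivial and $e(-\mu\ell)=(-1)^{2\mu\ell}$ (since $2\ell\in\Z$). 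The key step is then the algebraic identity
\begin{equation*}
\frac{(2Mn-\ell)^2}{4M} + \lambda(2Mn-\ell) \;=\; \frac{(2M(n+\lambda)-\ell)^2}{4M} - M\lambda^2,
\end{equation*}
which absorbs the extra $q^{\lambda(2Mn-\ell)}$ into the Gaussian exponent of the shifted index. After reindexing $n\mapsto n-\lambda$ and multiplying by the prefactor $(-1)^{2\ell\mu}q^{M\lambda^2}\zeta^{2M\lambda}$ on the left-hand side of \eqref{PartThetaEllTrans}, the expression collapses to $(-1)^{\lambda\varepsilon}\sum_{n\geq\lambda}(-1)^{n\varepsilon}q^{(2Mn-\ell)^2/(4M)}\zeta^{2Mn-\ell}$; a parallel reindexing of the right-hand side $\theta^+_{\ell-2M\lambda,\varepsilon,M}(z)$ yields the same series, establishing the equality.

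For \eqref{PartThetaTrans2}, I specialize the above calculation to $\lambda=1$, $\mu=0$ to show that $(-1)^\varepsilon q^M\zeta^{2M}\theta^+_{\ell,\varepsilon,M}(z+\tau)$ equals the tail $\sum_{n\geq 1}(-1)^{n\varepsilon}q^{(2Mn-\ell)^2/(4M)}\zeta^{2Mn-\ell}$ of the defining series for $\theta^+_{\ell,\varepsilon,M}(z)$. Subtracting from $\theta^+_{\ell,\varepsilon,M}(z)$ then isolates precisely the $n=0$ term $q^{\ell^2/(4M)}\zeta^{-\ell}$, yielding the claimed identity.

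The argument is entirely mechanical, and I do not expect a genuine obstacle. The only point requiring care is the bookkeeping of signs: verifying that the various factors $(-1)^{2\ell\mu}$, $(-1)^{\lambda\varepsilon}$, and the $(-1)^{(n-\lambda)\varepsilon}=(-1)^{\lambda\varepsilon}(-1)^{n\varepsilon}$ arising from the index shift combine correctly, and confirming that for $\lambda<0$ the extension of the summation range below $n=0$ appears symmetrically on both sides of \eqref{PartThetaEllTrans} so that the identity holds uniformly for all $\lambda\in\Z$.
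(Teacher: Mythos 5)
Your proof is correct and takes essentially the same route as the paper's: a direct expansion of the defining series, completion of the square in the Gaussian exponent, and the reindexing $n\mapsto n-\lambda$ (resp.\ $n\mapsto n-1$ for \eqref{PartThetaTrans2}) to match the two sides, with the $(-1)^{\lambda\varepsilon}$ factors cancelling as you note. Nothing further is needed.
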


\begin{proof}[Proof of Theorem \ref{mainthm2}] 
By the Residue Theorem we have
$$ \int_{\partial P_{z_0}} \phi(v)\ko \theta^+_{\ell,\varepsilon,-m}(v)\ko dv=2\pi i \sum_{u\in s_{z_0,\tau}} \res_{v=u}\ko \bigl[ \phi(v)\ko \theta^+_{\ell,\varepsilon,-m} (v)\bigr].$$

On the other hand, we can compute the integral directly. Since $\phi\, \theta^+_{\ell,\varepsilon,-m}$ is one--periodic (using the fact that $\ell\in m+\Z$), we find, using \eqref{elliptictrans} and \eqref{PartThetaTrans2} that
\begin{equation*}
\begin{split}
\int_{\partial P_{z_0}} \phi(v)\ko \theta^+_{\ell,\varepsilon,-m}(v)\ko dv&= \int_{z_0}^{z_0+1} \phi(v)\ko \theta^+_{\ell,\varepsilon,-m}(v)\ko dv - \int_{z_0+\tau}^{z_0+\tau+1} \phi(v)\ko \theta^+_{\ell,\varepsilon,-m}(v)\ko dv\\
&= \int_{z_0}^{z_0+1} \bigl( \phi(v)\ko \theta^+_{\ell,\varepsilon,-m}(v)- \phi(v+\tau)\ko \theta^+_{\ell,\varepsilon,-m}(v+\tau)\bigr)\ko dv\\
&= \int_{z_0}^{z_0+1} \hspace*{-0.1in}\phi(v) \bigl( \theta^+_{\ell,\varepsilon,-m}(v) - (-1)^{\varepsilon}e^{-2\pi im\of{\tau+2v}}\ko \theta^+_{\ell,\varepsilon,-m} (v+\tau)\bigr)\ko dv\\
&=  e^{-\frac{\pi i \ell^2 \tau}{2m}}\int_{z_0}^{z_0+1} \phi(v)\ko e^{-2\pi i\ell v}dv =h_{\ell,z_0}(\tau).
\end{split}
\end{equation*}
Comparing the two evaluations of the integral implies that
\[h_{\ell,z_0}(\tau) = 2\pi i \sum_{u\in s_{z_0,\tau}} \res_{v=u}\ko \bigl[ \phi(v)\ko \theta^+_{\ell,\varepsilon,-m} (v)\bigr].\]
The result then follows directly by inserting the definition of the Laurent coefficients into the last formula.
\end{proof}

\begin{proof}[Proof of Corollaries \ref{OldThm13Cor} and \ref{OldThm14Cor}] 
By \eqref{JacobiThetaJacobiForm}, we find that $\phi_{M,N}$ transforms according to \eqref{elliptictrans} with $\varepsilon=\varepsilon(N)$ and $m=\frac{M-N}2$. 
Further note that $\phi_{M,N}$ is a function whose only poles are poles of order $N$ in $\Z+\Z\tau$. Corollary \ref{OldThm13Cor} then follows directly by applying (1.2) with $z_0=-\frac12-\frac\tau2$. Similarly,  Corollary \ref{OldThm14Cor} follows directly by plugging into Theorem \ref{mainthm2}. 
\end{proof}

\end{document}